\documentclass[12pt]{amsart}
\usepackage{amssymb,amsmath,amsthm,mathrsfs,multirow,enumerate}

\usepackage{graphicx} 
\usepackage[all]{xy}
\usepackage{cite}
\usepackage{color}
\usepackage[bottom=1.5in, top=1.5in, left=1 in, right=1 in]{geometry}


\numberwithin{equation}{section} 

\theoremstyle{plain}
\newtheorem{Theorem}{Theorem}[section]
\newtheorem{Corollary}[Theorem]{Corollary}

\theoremstyle{definition}

\newtheorem{Example}{Example}

\newtheorem*{Remark}{Remark}

\newcommand{\bbm}{\begin{bmatrix}}
\newcommand{\ebm}{\end{bmatrix}}

\begin{document}
\title[Hilbert series of typical representations for Lie superalgebras] {Hilbert series of typical representations for Lie superalgebras} 
\author{Alexander Heaton and Songpon Sriwongsa}

\address{Alexander Heaton\\ (1) Max Planck Institute for Mathematics in the Sciences\\ Leipzig, Germany \\ and (2) Technische Universit\"at Berlin}
\email{\tt heaton@mis.mpg.de, alexheaton2@gmail.com}

\address{Songpon Sriwongsa\\ (1) Department of Mathematics \\ Faculty of Science\\ King Mongkut's University of Technology Thonburi (KMUTT) \\Bangkok 10140, Thailand, 
	\newline (2) Center of excellence in theoretical and computational science (TaCS-COE), Science Laboratory Building, Faculty of Science, King Mongkut's University of Technology Thonburi (KMUTT), Bangkok 10140, Thailand} 
\email{\tt songpon.sri@kmutt.ac.th, songponsriwongsa@gmail.com}

\keywords{Hilbert series;  Projective embedding;
Typical representations.}

\subjclass[2010]{}

\date{\today}

\maketitle

\begin{abstract}
Let $\mathfrak{g}$ be a basic classical Lie superalgebra over $\mathbb{C}$. In the case of a typical weight whose every nonnegative integer multiple is also typical, we compute a closed form for the Hilbert series whose coefficients encode the dimensions of finite-dimensional irreducible typical $\mathfrak{g}$-representations. We give a formula for this Hilbert series in terms of elementary symmetric polynomials and Eulerian polynomials. Additionally, we show a simple closed form in terms of differential operators.
\end{abstract}

\section{Introduction}

The purpose of this paper is to produce generating functions applicable to Lie superalgebras, analogous to those that exist for Lie algebras, along the lines of \cite{GW11}. We consider highest weights $\Lambda$ of the basic classical Lie superalgebras which are dominant, integral, and whose every multiple $n\Lambda, \, n \in \mathbb{N}$ is typical. By the Hilbert series $H_\Lambda(q)$ we mean the formal power series recording the dimensions of the irreducible finite-dimensional typical representations $V(n\Lambda)$ associated to the highest weight $\Lambda$ as in
\begin{equation*}
    H_\Lambda(q) = \sum_{n \in \mathbb{N}} \text{dim} V(n\Lambda) \,\, q^n.
\end{equation*}
The main results Theorem \ref{mainthm} and Corollary \ref{thm:main-theorem} give two formulas for this Hilbert series: one as a series in terms of finitely many elementary symmetric polynomials and Eulerian polynomials and the other as
\begin{equation*}
    H_\Lambda (q) = h_{\Lambda}\left(q \frac{d}{dq}\right) \frac{1}{1-q},
\end{equation*}
where $h_{\Lambda}$ is a polynomial in one variable evaluated symbolically at the differential operator given above. The polynomial $h_\Lambda$ factors as a product of affine-linear forms over $\mathbb{Q}$. Explicitly, we will show that
\begin{equation*}
    h_{\Lambda}(t) = 2^{|\Delta_1^+|} \prod_{\alpha \in \Delta_0^+} \left( 1 - \frac{(\rho_1,\alpha)}{(\rho_0,\alpha)} + \frac{(\Lambda,\alpha)}{(\rho_0,\alpha)} \, t \right),
\end{equation*}
where all parameters are described in Section \ref{Pre}.
Our result is purely combinatorial. \color{black} However, geometric interpretations of the formula may also be of interest. In Section \ref{section:grassmannian} we discuss a geometric example which would motivate extending the formulas we derive to the case of singly atypical representations. Section \ref{Pre} gives some basic preliminaries while Section \ref{section:main-results} proves our main results. Finally, Section \ref{section:examples} computes some examples.

\section{Preliminaries}\label{Pre}
\subsection{Lie superalgebras}
Throughout this paper, the ground field is $\mathbb{C}$. The following preliminaries are taken from \cite{K77, K78}. A {\it superspace} over $\mathbb{C}$ is a $\mathbb{Z}_2$-graded vector space $V = V_{\bar{0}} \oplus V_{\bar{1}}$. Let $p(a)$ be the degree of a homogeneous element $a$ and call $a$ {\it even} or {\it odd} if $p(a)$ is $0$ or $1$ respectively. A {\it Lie superalgebra} is a superspace $\mathfrak{g} = \mathfrak{g}_{\bar{0}} \oplus \mathfrak{g}_{\bar{1}}$, with a bilinear form $[\cdot, \cdot]$ satisfying the following axioms for all homogeneous elements $a, b, c \in \mathfrak{g}$
\begin{enumerate}
	\item $[a, b] = -(-1)^{p(a)p(b)}[b, a]$;
	\item $[a, [b, c]] = [[a, b], c] + (-1)^{p(a)p(b)}[b, [a, c]]$.
\end{enumerate}
All simple finite-dimensional Lie superalgebras have been classified by V. Kac in \cite{K75}. A {\it basic classical} Lie superalgebra is one of
\begin{enumerate}
	\item the simple Lie algebras, or
	\item the Lie superalgebras $A(m, n), B(m, n), C(n), D(m, n), D(2,1; \alpha), F(4),$ and $G(3)$.
\end{enumerate}
The reader is referred to \cite{K75, K77, M12} for the constructions of these Lie superalgebras.

Let $\mathfrak{g}$ be a basic classical Lie superalgebra and let $H$ be the Cartan subalgebra of $\mathfrak{g}$ consisting of diagonal matrices. For $\alpha \in H^*\setminus \{0\}$, let $\mathfrak{g}_\alpha = \{x \in \mathfrak{g} \mid [h, x] = \alpha(h)x, \forall h\in H\}$. We call $\alpha$ a {\it root} if $\mathfrak{g}_\alpha \neq 0$. A root $\alpha$ is called {\it even} (respectively {\it odd}) if $\mathfrak{g}_\alpha \cap \mathfrak{g}_{\bar{0}} \neq 0$ (respectively $\mathfrak{g}_\alpha \cap \mathfrak{g}_{\bar{1}} \neq 0$). The set of all roots is $\Delta = \Delta_0 \cup \Delta_1$ where $\Delta_0$ is the set of all even roots and $\Delta_1$ is the set of all odd roots. We also need the set $\overline{\Delta}_1 = \{\alpha \in \Delta_1 \mid 2\alpha \notin \Delta_0 \}$ and we fix an invariant non-degenerate bilinear form $(\cdot, \cdot)$ on $H^*$. 

Fix a Borel subalgebra $B$ of $\mathfrak{g}$. Since the adjoint representation of $H$ in $\mathfrak{g}$ is diagonalizable, we have the following decomposition of $\mathfrak{g}$.
\[
\mathfrak{g} = N^- \oplus H \oplus N^+,  B = H \oplus N^+,
\]
where $N^-$ and $N^+$ are subalgebras and $[H, N^-] \subset N^-, [H, N^+] \subset N^+$. A root $\alpha$ is called {\it positive } if $\mathfrak{g}_\alpha \cap N^+ \neq 0$. We denote by $\Delta^+, \Delta_0^+, \Delta_1^+, \overline{\Delta}_1^+$ the subsets of positive roots in the sets $\Delta, \Delta_0, \Delta_1, \overline{\Delta}_1$ respectively. A positive root $\alpha$ is called {\it simple} if it is indecomposable into a sum of two positive roots. Let $\{\alpha_1, \alpha_2, \ldots, \alpha_r\}$ be the set of all simple roots. Finally, we let $\rho_0$ (respectively $\rho_1$) be the half-sum of all of the even (respectively odd) positive roots and we set $\rho = \rho_0 - \rho_1$. 

From now on, $\mathfrak{g}$ is one of the basic classical Lie superalgebras 
\[
A(m, n), m \neq n, B(m, n), C(n), D(m, n), D(2,1; \alpha), F(4),G(3).
\]
 For $\Lambda \in H^*$, we denote by $V(\Lambda)$ an irreducible representation of $\mathfrak{g}$ ($\mathfrak{g}$-module) with highest weight $\Lambda$. Let $B$ be a distinguished Borel subalgebra of $\mathfrak{g}$ and let $e_i, f_i, h_i, i = 1, 2, \ldots, r$ be the generators of $\mathfrak{g}$ as a contragredient Lie superalgebra. The \textit{numerical marks} for $\Lambda$ are: 
\[
a_i = \Lambda(h_i)
\]
for all $i = 1, 2, \ldots, r$. In fact, $a_i = \frac{2(\Lambda, \alpha_i)}{(\alpha_i, \alpha_i)}$ if $\alpha_i$ is even. Using these numerical marks, we have the conditions for $V(\Lambda)$ being finite-dimensional stated in Proposition~2.3~of~ \cite{K78}.

\subsection{$\mathbb{N}$-typical weights}\label{Ntypical}

Let $\Lambda$ be the highest weight of a finite-dimensional irreducible representation $V(\Lambda)$ of $\mathfrak{g}$. We call the weight $\Lambda$ and the representation $V(\Lambda)$ {\it typical} if 
\[
(\Lambda + \rho, \alpha) \neq 0
\]
for all $\alpha \in \overline{\Delta}_1^+$.
In this paper, we consider a specific weight $\Lambda$ for which $k\Lambda$ is typical and $V(k\Lambda)$ is finite-dimensional for all $k \in \mathbb{N}$ and we call such a weight $\mathbb{N}$-{\it typical}. Note that 
\begin{enumerate}
    \item if $V(\Lambda)$ is finite dimensional, then so is $V(k \Lambda)$ for all $k \in \mathbb{N}$,
    \item if $\Lambda$ is $\mathbb{N}$-typical, then so is $k\Lambda$ for all $k \in \mathbb{N}$.
\end{enumerate}

The complete list of all finite-dimensional irreducible typical $\mathfrak{g}$-representations $V(\Lambda)$ is given in \cite{K78}. This leads us to the following complete criteria for $\Lambda$ being $\mathbb{N}$-typical directly since the form $(\cdot, \cdot)$ is bilinear. For convenience here, we assume that $V(k\Lambda)$ is finite-dimensional for all $k \in \mathbb{N}$. This section quotes from \cite{K78}, but modifies the results there for the case of $\mathbb{N}$-typical representations.

\begin{enumerate}
	\item $\mathfrak{g} = A(m, n), m \neq n$. A weight $\Lambda$ is $\mathbb{N}$-typical if and only if 
	\[
	a_{m+1} \neq \sum_{t = m + 2}^{j}a_t - \sum_{t = i}^{m} a_t - \frac{ 2m}{k} - \frac{2}{k} + \frac{i}{k} + \frac{j}{k}
	\]
	for all $1 \le i \le m + 1 \le j \le m + n + 1$ and $k \in \mathbb{N}$.
	
    \item $\mathfrak{g} = C(n)$. A weight $\Lambda$ is $\mathbb{N}$-typical if and only if 
	\begin{align*}
	a_1 &\neq \sum_{t = 2}^{i} a_t + \frac{i}{k} - \frac{1}{k} \\
	a_1 &\neq \sum_{t = 2}^{i} a_t + 2 \sum_{t = i + 1}^{n} a_t + \frac{2n}{k} - \frac{i}{k} - \frac{1}{k},
	\end{align*} 
	for all $1 \le i \le n - 1$ and $k \in \mathbb{N}$.
	
\item $\mathfrak{g} = B(m, n), m \neq 0$. A weight $\Lambda$ is $\mathbb{N}$-typical if and only if 
	\begin{align*}
&\sum_{t = i}^{n}a_t - \sum_{t = n + 1}^{j} a_t + \frac{2n}{k} - \frac{i}{k} - \frac{j}{k} \neq 0 \text{ and } \\
&\sum_{t = i}^{n}a_t - \sum_{t = n + 1}^{j} a_t - 2 \sum_{t = j + 1}^{m + n -1} a_t - a_{m+n} - \frac{i}{k} + \frac{j}{k} - \frac{2m}{k} + \frac{1}{k} \neq 0,
\end{align*} 
for all $1 \le i \le n \le j \le m + n - 1$ and $k \in \mathbb{N}$.
\item $\mathfrak{g} = B(0, n)$. Any weight $\Lambda$ is $\mathbb{N}$-typical.
\item $\mathfrak{g} = D(m, n)$. A weight $\Lambda$ is $\mathbb{N}$-typical if and only if all of the following hold
\begin{enumerate}
	\item $ \displaystyle \sum_{t = i}^{n}a_t - \sum_{t = n + 1}^{j} a_t + \frac{2n}{k} - \frac{i}{k} - \frac{j}{k} \neq 0 $ \\
	for all $ 1 \le i \le n \le j \le m + n - 1$ and $k \in \mathbb{N}$.
	\item $\displaystyle \sum_{t = i}^{n} a_t - \sum_{t = n + 1}^{m + n - 2}a_t - a_{m + n} + \frac{n}{k} -\frac{m}{k} -\frac{i}{k} + \frac{1}{k} \neq 0$ \\
		for all $ 1 \le i \le n$ and $k \in \mathbb{N}$.
	\item $\displaystyle \sum_{t = i}^{n}a_t - \sum_{t = n + 1}^{j}a_t - 2\sum_{t = j + 1}^{m + n - 2}a_t - a_{m + n - 1} - a_{m + n} - \frac{i}{k} + \frac{j}{k} - \frac{2m}{k} \neq 0$ \\
	for all $ 1 \le i \le n \le j \le m + n - 2$ and $k \in \mathbb{N}$.
\end{enumerate}
\item $\mathfrak{g} = D(2, 1; \alpha)$. A weight $\Lambda$ is $\mathbb{N}$-typical if and only if
\[
a_1 \neq 0, a_1 \neq a_2 + \frac{1}{k}, a_1 \neq \alpha (a_3 + \frac{1}{k}) \text{ and } a_1 \neq a_2 + \alpha (a_3 + \frac{1}{k}) + \frac{1}{k} 
\]
for all $k \in \mathbb{N}$.
\item $\mathfrak{g} = G(3)$. A weight $\Lambda$ is $\mathbb{N}$-typical if and only if
\begin{align*}
&a_1 \neq 0, a_1 \neq a_2 + \frac{1}{k}, a_1 \neq a_2 + 3a_3 + \frac{4}{k}, a_1 \neq 3a_2 + 3a_3 + \frac{6}{k},\\ & a_1 \neq 3a_2 + 6a_3 + \frac{9}{k}, a_1 \neq 4a_2 + 6a_3 + \frac{10}{k}
\end{align*}
for all $k \in \mathbb{N}$.
\item $\mathfrak{g} = F(4)$. A weight $\Lambda$ is $\mathbb{N}$-typical if and only if	
\begin{align*}
&a_1 \neq 0, a_1 \neq a_2 + \frac{1}{k}, a_1 \neq a_2 + 2a_3 + \frac{3}{k}, a_1 \neq 2a_2 + 2a_3 + \frac{4}{k} \\
&a_1 \neq a_2 + 2a_3 + 2a_4 + \frac{5}{k}, a_1 \neq 2a_2 + 2a_3 + 2a_4 + \frac{6}{k} \\
&a_1 \neq 2a_2 + 4a_3 + 2a_4 + \frac{8}{k}, a_1 \neq 3a_2 + 4a_3 + 2a_4 + \frac{9}{k}
\end{align*}
for all $k \in \mathbb{N}$.
\end{enumerate}

\begin{Remark}
	We use the notation that $\displaystyle \sum_{t = r}^{s} a_t = 0$ if $s < r$.
\end{Remark}

\begin{Example}\label{example:N-typical}
Let $\mathfrak{g} = \mathfrak{sl}(2,1)$ and let $\Lambda$ be a weight such that $a_1 =\Lambda(h_1) = 1$ and $a_2=\Lambda(h_2) = -1$. Then $V(\Lambda)$ is a finite dimensional typical $\mathfrak{g}$-module and $\Lambda$ is $\mathbb{N}$-typical.
\end{Example}

\begin{Example}\label{example:fails-N-typical}
Let $\mathfrak{g} = \mathfrak{sl}(2,1)$ and let $\Lambda$ be a weight such that $a_1 = \Lambda(h_1) = 0$ and $a_2 = \Lambda(h_2) = \frac{-1}{2}$. Then $V(\Lambda)$ is a finite dimensional typical $\mathfrak{g}$-module, but $V(2\Lambda)$ is not typical. Therefore $\Lambda$ is not an $\mathbb{N}$-typical weight.
\end{Example}

The main result pertaining to Hilbert series in the next section involves two  types of well-known polynomials, the elementary symmetric polynomials and the Eulerian polynomials. Here we recall the latter polynomials \cite{S2012}. The $j$th {\it Eulerian polynomial} is
\[
A_n(t) := \sum_{k = 0}^{n - 1} A(n, k) t^k,  \ \ \ A_0(t) = 1
\]
where $A(n, k)$ is the number of permutations of the numbers $1$ to $n$ in which exactly $k$ elements are greater than the previous element. These numbers are called {\it Eulerian numbers} and they can be computed from an explicit formula
\begin{equation*}
    A(n, k) = \sum_{r = 0}^{k}(-1)^r \binom{n+1}{r} (k + 1 - r)^n.
\end{equation*}
Note that these numbers can be arranged into a triangle, called {\it Euler's triangle}. 
\begin{align*}
\begin{matrix}
 \ \  \ \ n = 1 \hskip 1.5 cm & & & & & 1 & & & & \\
 \ \  \ \ n = 2 \hskip 1.5 cm &  & & & 1 &  & 1 & & &  \\
 \ \  \ \ n = 3 \hskip 1.5 cm &  & & 1 &  & 4 &  & 1 & & \\
 \ \  \ \ n = 4 \hskip 1.5 cm  &  & 1 &  & 11 &  & 11  &  & 1 & \\
 \ \  \ \ n = 5 \hskip 1.5 cm   & 1 &  & 26  &  & 66 &  & 26 &  & 1 \\
\vdots \hskip 1 cm  & &  &  &  &  \vdots &  & &
\end{matrix}
\end{align*}

\section{Hilbert Series of $\mathbb{N}$-typical representations}\label{section:main-results}

In this section, we present a closed form for the Hilbert series of $\mathbb{N}$-typical representations for $\mathfrak{g}$ defined by
\[
H_\Lambda(q) = \sum_{n \geq 0} \dim (V(n \Lambda)) q^n,
\]
where $V(n\Lambda)$ is a finite dimensional typical $\mathfrak{g}$-module, i.e. $\Lambda$ is an $\mathbb{N}$-typical weight.

\begin{Theorem}\label{mainthm}
	Let $\Lambda$ be an $\mathbb{N}$-typical weight. Then 
	\[
	H_\Lambda(q) = 2^{d_1}\prod_{i = 1}^{d_0}(1 - c_1(\beta_i))\sum_{j = 0}^{d_0}e_j \Big(\frac{c_\Lambda(\beta_1)}{1 - c_1(\beta_1)}, \ldots, \frac{c_\Lambda(\beta_{d_0})}{1 - c_1(\beta_{d_0})} \Big)A_j(q) \frac{q}{(1 - q)^{j + 1}},
	\] 
		where $d_0 = |\Delta_0^+|, d_1 = |\Delta_1^+|, c_1(\alpha) = \frac{(\rho_1, \alpha)}{(\rho_0, \alpha)}, c_\Lambda(\alpha) = \frac{(\Lambda, \alpha)}{(\rho_0, \alpha)}$, $e_j$ is the $j$th elementary symmetric polynomial in $d_0$ variables, $\Delta_0^+ = \{\beta_1, \ldots, \beta_{d_0}\}$, and $A_j$ is the $j$th Eulerian polynomial.
\end{Theorem}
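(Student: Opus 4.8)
The starting point is the Weyl-type dimension formula for a typical finite-dimensional irreducible $\mathfrak{g}$-module. For a basic classical Lie superalgebra and a typical weight $\Lambda$, Kac's character formula gives
\[
\dim V(\Lambda) = 2^{d_1} \prod_{\alpha \in \Delta_0^+} \frac{(\Lambda + \rho, \alpha)}{(\rho, \alpha)}.
\]
I would first substitute $\Lambda \mapsto n\Lambda$ and rewrite each factor. Using $\rho = \rho_0 - \rho_1$, we have $(\,n\Lambda + \rho, \alpha) = (\rho_0,\alpha)\big(1 - c_1(\alpha) + n\, c_\Lambda(\alpha)\big)$ and $(\rho,\alpha) = (\rho_0,\alpha)\big(1 - c_1(\alpha)\big)$, so the $(\rho_0,\alpha)$ factors cancel and
\[
\dim V(n\Lambda) = 2^{d_1} \prod_{i=1}^{d_0} \Big(1 - c_1(\beta_i) + n\, c_\Lambda(\beta_i)\Big)
= 2^{d_1} \prod_{i=1}^{d_0}\big(1 - c_1(\beta_i)\big)\prod_{i=1}^{d_0}\Big(1 + n\, \tfrac{c_\Lambda(\beta_i)}{1 - c_1(\beta_i)}\Big).
\]
(The $\mathbb{N}$-typical hypothesis is exactly what guarantees $1 - c_1(\beta_i) \neq 0$ for all $i$ and that the dimension formula is valid for every $n$; this is where I invoke the classification recalled in Section~\ref{Ntypical}.)

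Next I expand the product $\prod_{i=1}^{d_0}(1 + n x_i)$ with $x_i = c_\Lambda(\beta_i)/(1 - c_1(\beta_i))$ by the elementary symmetric polynomials: $\prod_{i=1}^{d_0}(1 + n x_i) = \sum_{j=0}^{d_0} e_j(x_1,\ldots,x_{d_0})\, n^j$. Therefore
\[
H_\Lambda(q) = \sum_{n \geq 0} \dim V(n\Lambda)\, q^n = 2^{d_1}\prod_{i=1}^{d_0}\big(1 - c_1(\beta_i)\big) \sum_{j=0}^{d_0} e_j(x_1,\ldots,x_{d_0}) \sum_{n \geq 0} n^j q^n.
\]
The remaining ingredient is the classical identity $\sum_{n \geq 0} n^j q^n = A_j(q)\, \dfrac{q}{(1-q)^{j+1}}$ for $j \geq 1$, and $\sum_{n\geq 0} q^n = 1/(1-q) = A_0(q)/(1-q)$, which is the standard generating-function description of the Eulerian polynomials; I would either cite \cite{S2012} or give the one-line proof via $\sum n^j q^n = (q\,d/dq)^j \frac{1}{1-q}$ together with the recursion $A_{j}(q) = (1 + (j-1)q)A_{j-1}(q) \cdot$(something) — more cleanly, the known fact that $\left(q\frac{d}{dq}\right)^j \frac{1}{1-q} = \frac{q\,A_j(q)}{(1-q)^{j+1}}$. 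Substituting this in term by term yields exactly the claimed formula.

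**Main obstacle.** The algebra here is entirely routine; the only real content is making sure the typical dimension formula applies uniformly in $n$, so the genuine work (already done in Section~\ref{Ntypical}) is the verification that $\mathbb{N}$-typicality is equivalent to the stated arithmetic conditions and in particular forces $(\rho,\beta_i) \neq 0$, i.e. $c_1(\beta_i) \neq 1$, so that division is legitimate. A secondary point to state carefully is the $j=0$ term: one must check the formula degenerates correctly there, since $A_0(q) = 1$ but the written expression has $q/(1-q)^{j+1}$ rather than $1/(1-q)$ — so I would note that the $n=0$ term contributes $\dim V(0) = 1$ and confirm that $\sum_{n\geq 1} q^n = q/(1-q)$ matches $A_0(q)\,q/(1-q)$, i.e. the sum over $n \geq 0$ of $q^n$ minus the constant; depending on the indexing convention chosen for $H_\Lambda$ this may require a remark that the $n=0$ term is or isn't absorbed. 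Beyond that, assembling the pieces is immediate.
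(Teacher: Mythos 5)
Your proof follows the same route as the paper: start from Kac's dimension formula for typical modules, rewrite each factor as an affine function of $n$, expand in elementary symmetric polynomials, and evaluate $\sum_n n^j q^n$ via Eulerian polynomials. However, your quoted starting formula is wrong in a way that makes your first two displays mutually inconsistent. Kac's Proposition~2.10 reads
\[
\dim V(\Lambda) = 2^{d_1}\prod_{\alpha\in\Delta_0^+}\frac{(\Lambda+\rho,\alpha)}{(\rho_0,\alpha)},
\]
with $(\rho_0,\alpha)$ (not $(\rho,\alpha)$) in the denominator; this is forced by the Weyl-denominator specialization at $t\rho_0$. If one instead divides by $(\rho,\alpha)=(\rho_0,\alpha)(1-c_1(\alpha))$ as you wrote, the cancellation you describe yields $2^{d_1}\prod_i\bigl(1+n\,c_\Lambda(\beta_i)/(1-c_1(\beta_i))\bigr)$, i.e.\ the answer \emph{without} the prefactor $\prod_i(1-c_1(\beta_i))$, which contradicts both your own next display and the theorem. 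So the derivation only works because your second display silently reverts to the correct formula; you should fix the first display. Once that is corrected, the rest is fine and matches the paper. Your secondary remark about the $j=0$ term is a genuine and worthwhile catch: the identity $\bigl(q\,\tfrac{d}{dq}\bigr)^j\tfrac{1}{1-q}=A_j(q)\,q/(1-q)^{j+1}$ holds only for $j\ge 1$, while for $j=0$ the left side is $1/(1-q)$ and the right side is $q/(1-q)$; the paper's proof (and the statement of the theorem) glosses over this constant discrepancy in the $j=0$ summand, and your proposed remark resolving it via the convention for the $n=0$ coefficient is an improvement.
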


\begin{proof}
	Note that, by Proposition 2.10 in \cite{K78}, we have 
	\begin{align}
	\dim V(k \Lambda) = &2^{d_1} \prod\limits_{\alpha \in \Delta_0^+}\frac{(k\Lambda + \rho, \alpha)}{(\rho_0, \alpha)} \nonumber \\
	= &2^{d_1} \prod\limits_{\alpha \in \Delta_0^+}\frac{(k\Lambda + \rho_0 - \rho_1, \alpha)}{(\rho_0, \alpha)} \nonumber \\
	= &2^{d_1} \prod\limits_{\alpha \in \Delta_0^+}(1 - c_1(\alpha) + k c_\Lambda(\alpha)) \label{dim} \text{ because } (\cdot, \cdot) \text{ is bilinear}.
	\end{align}
	Thus, 
	\begin{equation}\label{HS}
	H_{\Lambda}(q) = \sum_{k \geq 0}\Big(2^{d_1} \prod_{\alpha \in \Delta_0^+}(1 - c_1(\alpha) + k c_\Lambda(\alpha))\Big) q^k.
	\end{equation}
	Ignoring $2^{d_1}$, the product in (\ref{dim}) is a polynomial in $k$. Then
	\begin{align*}
	\prod\limits_{\alpha \in \Delta_0^+}(1 - c_1(\alpha) + k c_\Lambda(\alpha)) &= \prod\limits_{i = 1}^{d_0}(1 - c_1(\beta_i)) \prod\limits_{j = 1}^{d_0}\Big(1 + \frac{kc_\Lambda(\beta_j)}{1 - c_1(\beta_j)}\Big) \\
	&=   \prod\limits_{i = 1}^{d_0}(1 - c_1(\beta_i)) \sum_{j=0}^{d_0}e_j\Big(\frac{c_\Lambda(\beta_1)}{1 - c_1(\beta_1)}, \ldots, \frac{c_\Lambda(\beta_{d_0})}{1 - c_1(\beta_{d_0})} \Big)k^j,
	\end{align*}
	where $e_j$ is the $j$th elementary symmetric polynomial in $d_0$ variables. The series (\ref{HS}) becomes
	\begin{align}\label{polyn}
	H_{\Lambda}(q) = 2^{d_1}\prod\limits_{i = 1}^{d_0}(1 - c_1(\beta_i)) \sum_{j=0}^{d_0}e_j\Big(\frac{c_\Lambda(\beta_1)}{1 - c_1(\beta_1)}, \ldots, \frac{c_\Lambda(\beta_{d_0})}{1 - c_1(\beta_{d_0})} \Big)\sum_{k \geq 0} k^j q^k.
	\end{align}
	We must understand the sum 
	\[
	\sum_{k \geq 0} k^j q^k,
	\]
	which has a nice story on its own. For example, it is connected to \textit{Eulerian polynomials} (see \cite[page 22]{S2012}). A closed form for this interesting series can be derived as in \cite{GW11}. We recall the details. Define
	\[
	f_j(q) = \sum_{k \geq 0} k^j q^k
	\]
	and note that $f_0(q) = \displaystyle\sum_{k \geq 0}q^k = \frac{1}{1 - q}$.
	Applying the differential operator $q \frac{d}{dq}$ to $f_{j - 1}(q)$ gives us $f_j(q)$. Applying the operators successively to $f_0(q)$, we have $f_j(q) = (q\frac{d}{dq})^j\frac{1}{1 - q}$. The expression of $(q\frac{d}{dq})^j\frac{1}{1 - q}$ is well-known. In fact, 
	\[
	\left(q\frac{d}{dq}\right)^j\frac{1}{1 - q} = A_j(q)\frac{q}{(1 - q)^{j + 1}},
	\]
	where $A_j$ is the $j$th Eulerian polynomial as discussed in Section \ref{Pre}.
	Therefore, we have shown 
	\[
		H_\Lambda(q) = 2^{d_1}\prod_{i = 1}^{d_0}(1 - c_1(\beta_i))\sum_{j = 0}^{d_0}e_j \Big(\frac{c_\Lambda(\beta_1)}{1 - c_1(\beta_1)}, \ldots, \frac{c_\Lambda(\beta_{d_0})}{1 - c_1(\beta_{d_0})} \Big)A_j(q) \frac{q}{(1 - q)^{j + 1}}
	\]
	as desired.
\end{proof}

\begin{Corollary}\label{thm:main-theorem}
    Let $\Lambda$ be an $\mathbb{N}$-typical weight. Then 
    \begin{equation*}
    H_\Lambda (q) = h_{\Lambda}\left(q \frac{d}{dq}\right) \frac{1}{1-q},
\end{equation*}
where $h_{\Lambda}(t)$ is a polynomial in one variable which factors as a product of affine linear forms over $\mathbb{Q}$ as in
\begin{equation*}
    h_{\Lambda}(t) = 2^{|\Delta_1^+|} \prod_{\alpha \in \Delta_0^+} \left( 1 - \frac{(\rho_1,\alpha)}{(\rho_0,\alpha)} + \frac{(\Lambda,\alpha)}{(\rho_0,\alpha)} t \right).
\end{equation*}
\end{Corollary}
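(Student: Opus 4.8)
The plan is to observe that the Corollary merely repackages the computation in the proof of Theorem~\ref{mainthm}: instead of expanding the polynomial appearing in \eqref{dim} into elementary symmetric polynomials, one reads off the polynomial $h_\Lambda$ directly and feeds it into the operator identity for $f_j(q)$ used there.

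First I would recall from \eqref{dim} that $\dim V(k\Lambda) = 2^{d_1}\prod_{\alpha\in\Delta_0^+}\bigl(1-c_1(\alpha)+k\,c_\Lambda(\alpha)\bigr)$, and note that the right-hand side is exactly the one-variable polynomial
\[
h_\Lambda(t) = 2^{|\Delta_1^+|}\prod_{\alpha\in\Delta_0^+}\left(1-\frac{(\rho_1,\alpha)}{(\rho_0,\alpha)}+\frac{(\Lambda,\alpha)}{(\rho_0,\alpha)}\,t\right)
\]
evaluated at $t=k$; that is, $\dim V(k\Lambda)=h_\Lambda(k)$ for every $k\in\mathbb{N}$. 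By construction $h_\Lambda$ is a product of $d_0=|\Delta_0^+|$ affine-linear forms, so the factorization assertion is automatic. The only remaining point for the ``over $\mathbb{Q}$'' claim is that each coefficient $c_1(\alpha)=(\rho_1,\alpha)/(\rho_0,\alpha)$ and $c_\Lambda(\alpha)=(\Lambda,\alpha)/(\rho_0,\alpha)$ is rational, which holds because the invariant form $(\cdot,\cdot)$ can be normalized to take rational values on the root lattice and $\Lambda$ is integral.

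Next I would invoke the identity $f_j(q)=\sum_{k\geq 0}k^jq^k=\bigl(q\tfrac{d}{dq}\bigr)^j\tfrac{1}{1-q}$ already established inside the proof of Theorem~\ref{mainthm}. Writing $h_\Lambda(t)=\sum_{j=0}^{d_0}b_jt^j$ with $b_j\in\mathbb{Q}$ and interpreting $h_\Lambda\bigl(q\tfrac{d}{dq}\bigr)$ as the finite $\mathbb{Q}$-linear combination $\sum_{j}b_j\bigl(q\tfrac{d}{dq}\bigr)^j$ acting termwise on the power series $\tfrac{1}{1-q}$, one obtains
\[
h_\Lambda\!\left(q\frac{d}{dq}\right)\frac{1}{1-q}=\sum_{j=0}^{d_0}b_j\,f_j(q)=\sum_{j=0}^{d_0}b_j\sum_{k\geq 0}k^jq^k=\sum_{k\geq 0}h_\Lambda(k)\,q^k=\sum_{k\geq 0}\dim V(k\Lambda)\,q^k=H_\Lambda(q),
\]
which is the claim. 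I do not expect any real obstacle here, since all of the analytic content lives in Theorem~\ref{mainthm}; the only steps needing a sentence of justification are the legitimacy of applying the degree-$d_0$ operator $h_\Lambda(q\tfrac{d}{dq})$ termwise (immediate, as the operator is a finite sum) and the rationality remark above that underlies the ``over $\mathbb{Q}$'' statement.
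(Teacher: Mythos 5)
Your proposal is correct and follows essentially the same route as the paper: both rest on the identity $\sum_{k\ge 0}k^jq^k=\left(q\frac{d}{dq}\right)^j\frac{1}{1-q}$ established in the proof of Theorem \ref{mainthm} and amount to the substitution $k^j\mapsto\left(q\frac{d}{dq}\right)^j$ in the dimension generating function, your direct expansion of $h_\Lambda$ into monomial coefficients $b_j$ being just a repackaging of the paper's elementary-symmetric-polynomial expansion. Your added remark on the rationality of the coefficients is a harmless extra justification of the ``over $\mathbb{Q}$'' clause, which the paper leaves implicit.
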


\begin{proof}
	It follows from the expression 	
	\begin{align*}
H_{\Lambda}(q)& = 2^{d_1}\prod\limits_{i = 1}^{d_0}(1 - c_1(\beta_i)) \sum_{j=0}^{d_0}e_j\Big(\frac{c_\Lambda(\beta_1)}{1 - c_1(\beta_1)}, \ldots, \frac{c_\Lambda(\beta_{d_0})}{1 - c_1(\beta_{d_0})} \Big)\sum_{k \geq 0} k^j q^k \\
& = 2^{d_1}\prod\limits_{i = 1}^{d_0}(1 - c_1(\beta_i)) \sum_{j=0}^{d_0}e_j\Big(\frac{c_\Lambda(\beta_1)}{1 - c_1(\beta_1)}, \ldots, \frac{c_\Lambda(\beta_{d_0})}{1 - c_1(\beta_{d_0})} \Big)\left(q\frac{d}{dq}\right)^j\frac{1}{1 - q} \\
& =  2^{d_1}\prod\limits_{i = 1}^{d_0}(1 - c_1(\beta_i)) \sum_{j=0}^{d_0}e_j\Big(\frac{c_\Lambda(\beta_1)}{1 - c_1(\beta_1)}, \ldots, \frac{c_\Lambda(\beta_{d_0})}{1 - c_1(\beta_{d_0})} \Big)\left(q\frac{d}{dq}\right)^j \sum_{k \geq 0} q^k.
\end{align*}
The last expression differs from (\ref{polyn}) by the substitution $k^j \mapsto (q\frac{d}{dq})^j$. This proves the corollary.
\end{proof}

\begin{Remark}
	The dimension $\dim (V(\Lambda))$ can be recovered by taking $ \dfrac{d}{dq}$ to the R.H.S of the series in the above theorem or corollary and setting $q = 0$.
\end{Remark}

\section{Examples}\label{section:examples}

In this section, we compute several examples explicitly. In all cases the dimensions predicted by our Hilbert series match those from the dimension formula given by Kac in \cite[page 619]{K78}, as they should. Recall that by choosing a basis so that the Cartan $H$ corresponds to diagonal matrices, the root systems of $\mathfrak{g}$ can be described using the linear functionals which extract the entries along the diagonal. In this section we denote these linear functionals by $e_i$ and $d_j$ with $(e_i,e_j) = \delta_{ij}$ and $(d_i,d_j) = -\delta_{ij}$ and $(e_i,d_k) = 0$. Previously $e_i$ meant an elementary symmetric polynomial, but the notation should be clear from context. Then we can write a highest weight $\Lambda = \sum \lambda_i e_i + \sum \mu_j d_j$ so that the $\lambda_i$ and $\mu_j$ are coefficients of $\Lambda$ in the $e_i,d_j$ basis of $H^*$.

\begin{Example}
We consider an example of the Hilbert series $H_{\Lambda}(q)$ where $\Lambda$ is an $\mathbb{N}$-typical weight of $\mathfrak{g} = \mathfrak{sl}(2, 1)$. With the above notation, the simple root system is
\[
\{ \alpha_1 = e_1 - e_2, \hspace{0.5cm} \alpha_2 = e_2 - d_1  \}.
\]
Moreover, $\Delta_0^+ = \{\alpha_1 \}$ and $\Delta_1^+ = \{e_1 - d_1, \alpha_2 \}$.
Let $V(\Lambda)$ be a $\mathfrak{g}$-module with the highest weight $\Lambda$ where $\Lambda$ is an $\mathbb{N}$-typical weight, i.e., $a_1$ is a non-negative integer, $a_2 \neq -a_1 - \dfrac{1}{k}$ for all $k \in \mathbb{N}$ and $a_2 \neq 0$ (cf. Section \ref{Ntypical}). Then 
\[
H_{\Lambda}(q) = 2^2 \prod\limits_{\alpha \in \{\alpha_1 \}} \left(1 - c_1(\alpha) + c_{\Lambda}(\alpha)q \frac{d}{dq} \right) \frac{1}{1 - q}.
\]
Note that $c_1(\alpha_1) = 0$ and $c_\Lambda(\alpha_1) =  a_1$. Therefore, 
\begin{align*}
    H_{\Lambda}(q) = &4 \prod\limits_{\alpha \in \{\alpha_1 \}} \left(1  +  a_1 q \frac{d}{dq} \right) \frac{1}{1 - q} \\
	=& 4\Big( \frac{1 - q + a_1q}{(1 - q)^2}\Big).
\end{align*}

Below we compute several examples of dominant integral weights $\Lambda$ which are also $\mathbb{N}$-typical, and compute their Hilbert series according to Theorem \ref{mainthm} and Corollary \ref{thm:main-theorem}.
\begin{equation*}
\begin{tabular}{lll}
 $\Lambda, (a_1,a_2)$ & Hilbert Series &   \\ \hline
$\begin{tabular}{l}
$e_{1} + e_{2}$ \\
$\left(0, 1\right)$ \\
\end{tabular}$ & $\frac{4}{1 - q}$ & $4 + 4 q + 4 q^{2} + 4 q^{3} + 4 q^{4} + \mathcal{O}\left(q^{5}\right)$ \\
$\begin{tabular}{l}
$2 \, e_{1} + e_{2}$ \\
$\left(1, 1\right)$ \\
\end{tabular}$ & $\frac{4}{{\left(1 - q\right)}^{2}}$ & $4 + 8 q + 12 q^{2} + 16 q^{3} + 20 q^{4} + \mathcal{O}\left(q^{5}\right)$ \\
$\begin{tabular}{l}
$3 \, e_{1} + e_{2}$ \\
$\left(2, 1\right)$ \\
\end{tabular}$ & $\frac{4 \, {\left(q + 1\right)}}{{\left(1-q\right)}^{2}}$ & $4 + 12 q + 20 q^{2} + 28 q^{3} + 36 q^{4} + \mathcal{O}\left(q^{5}\right)$ \\
$\begin{tabular}{l}
$4 \, e_{1} + e_{2}$ \\
$\left(3, 1\right)$ \\
\end{tabular}$ & $\frac{4 \, {\left(2 \, q + 1\right)}}{{\left(1-q\right)}^{2}}$ & $4 + 16 q + 28 q^{2} + 40 q^{3} + 52 q^{4} + \mathcal{O}\left(q^{5}\right)$ \\
$\begin{tabular}{l}
$5 \, e_{1} + e_{2}$ \\
$\left(4, 1\right)$ \\
\end{tabular}$ & $\frac{4 \, {\left(3 \, q + 1\right)}}{{\left(1-q\right)}^{2}}$ & $4 + 20 q + 36 q^{2} + 52 q^{3} + 68 q^{4} + \mathcal{O}\left(q^{5}\right)$ \\
$\begin{tabular}{l}
$6 \, e_{1} + e_{2}$ \\
$\left(5, 1\right)$ \\
\end{tabular}$ & $\frac{4 \, {\left(4 \, q + 1\right)}}{{\left(1-q\right)}^{2}}$ & $4 + 24 q + 44 q^{2} + 64 q^{3} + 84 q^{4} + \mathcal{O}\left(q^{5}\right)$ \\
\end{tabular}
\end{equation*}
\end{Example}

\begin{Example}
Consider $\mathfrak{g} = \mathfrak{sl}(3,2)$. We display below all the $\mathbb{N}$-typical weights with numerical marks $(a_1,a_2,a_3,a_4)$ for $a_i \in \{0,1,2\}$ but fixing the single odd mark $a_3 = 1$.

\begin{footnotesize}
\begin{equation*}
\begin{tabular}{lll}
$\Lambda, \left(a_1,a_2,1,a_4\right)$ & Hilbert Series &   \\ \hline
$\begin{tabular}{l}
\footnotesize$-d_{2} + 2 \, e_{1} + 2 \, e_{2} + e_{3}$ \\
$\left(0, 1, 1, 1\right)$ \\
\end{tabular}$ & $\frac{64 \, {\left(2 \, q + 1\right)}}{{- \left(1-q \right)}^{4}}$ & $64 + 384 q + 1152 q^{2} + 2560 q^{3} + \cdots$ \\
$\begin{tabular}{l}
\footnotesize$-d_{2} + 3 \, e_{1} + 3 \, e_{2} + e_{3}$ \\
$\left(0, 2, 1, 1\right)$ \\
\end{tabular}$ & $\frac{64 \, {\left(3 \, q^{2} + 8 \, q + 1\right)}}{{ \left(1-q \right)}^{4}}$ & $64 + 768 q + 2880 q^{2} + 7168 q^{3} + \cdots$ \\
$\begin{tabular}{l}
\footnotesize$-2 \, d_{2} + 3 \, e_{1} + 3 \, e_{2} + e_{3}$ \\
$\left(0, 2, 1, 2\right)$ \\
\end{tabular}$ & $\frac{64 \, {\left(9 \, q^{2} + 14 \, q + 1\right)}}{{ \left(1-q \right)}^{4}}$ & $64 + 1152 q + 4800 q^{2} + 12544 q^{3} + \cdots$ \\
$\begin{tabular}{l}
\footnotesize$-2 \, d_{2} + 2 \, e_{1} + e_{2} + e_{3}$ \\
$\left(1, 0, 1, 2\right)$ \\
\end{tabular}$ & $\frac{64 \, {\left(5 \, q + 1\right)}}{{ \left(1-q \right)}^{4}}$ & $64 + 576 q + 1920 q^{2} + 4480 q^{3} + \cdots$ \\
$\begin{tabular}{l}
\footnotesize$-d_{2} + 3 \, e_{1} + 2 \, e_{2} + e_{3}$ \\
$\left(1, 1, 1, 1\right)$ \\
\end{tabular}$ & $\frac{64 \, {\left(q^{2} + 10 \, q + 1\right)} {\left(q + 1\right)}}{{ \left(1-q \right)}^{5}}$ & $64 + 1024 q + 5184 q^{2} + 16384 q^{3} + \cdots$ \\
$\begin{tabular}{l}
\footnotesize$-d_{2} + 4 \, e_{1} + 3 \, e_{2} + e_{3}$ \\
$\left(1, 2, 1, 1\right)$ \\
\end{tabular}$ & $\frac{64 \, {\left(6 \, q^{3} + 40 \, q^{2} + 25 \, q + 1\right)}}{{ \left(1-q \right)}^{5}}$ & $64 + 1920 q + 11520 q^{2} + 39424 q^{3} + \cdots$ \\
$\begin{tabular}{l}
\footnotesize$-2 \, d_{2} + 4 \, e_{1} + 3 \, e_{2} + e_{3}$ \\
$\left(1, 2, 1, 2\right)$ \\
\end{tabular}$ & $\frac{64 \, {\left(9 \, q^{2} + 38 \, q + 1\right)} {\left(2 \, q + 1\right)}}{{\left(1-q \right)}^{5}}$ & $64 + 2880 q + 19200 q^{2} + 68992 q^{3} + \cdots$ \\
$\begin{tabular}{l}
\footnotesize$-2 \, d_{2} + 3 \, e_{1} + e_{2} + e_{3}$ \\
$\left(2, 0, 1, 2\right)$ \\
\end{tabular}$ & $\frac{64 \, {\left(9 \, q^{2} + 14 \, q + 1\right)}}{{ \left(1-q \right)}^{4}}$ & $64 + 1152 q + 4800 q^{2} + 12544 q^{3} + \cdots$ \\
$\begin{tabular}{l}
\footnotesize$-d_{2} + 4 \, e_{1} + 2 \, e_{2} + e_{3}$ \\
$\left(2, 1, 1, 1\right)$ \\
\end{tabular}$ & $\frac{64 \, {\left(6 \, q^{3} + 40 \, q^{2} + 25 \, q + 1\right)}}{{ \left(1-q \right)}^{5}}$ & $64 + 1920 q + 11520 q^{2} + 39424 q^{3} + \cdots$ \\
$\begin{tabular}{l}
\footnotesize$-d_{2} + 5 \, e_{1} + 3 \, e_{2} + e_{3}$ \\
$\left(2, 2, 1, 1\right)$ \\
\end{tabular}$ & $\frac{64 \, {\left(27 \, q^{3} + 115 \, q^{2} + 49 \, q + 1\right)}}{{ \left(1-q \right)}^{5}}$ & $64 + 3456 q + 24000 q^{2} + 87808 q^{3} + \cdots$ \\
$\begin{tabular}{l}
\footnotesize$-2 \, d_{2} + 5 \, e_{1} + 3 \, e_{2} + e_{3}$ \\
$\left(2, 2, 1, 2\right)$ \\
\end{tabular}$ & $\frac{64 \, {\left(q^{4} + 76 \, q^{3} + 230 \, q^{2} + 76 \, q + 1\right)}}{{ \left(1-q \right)}^{5}}$ & $64 + 5184 q + 40000 q^{2} + 153664 q^{3} + \cdots$ \\
\end{tabular}
\end{equation*}
\end{footnotesize}
\end{Example}

\begin{Example}
We consider an example of the Hilbert series $H_{\Lambda}(q)$ where $\Lambda$ is an $\mathbb{N}$-typical weight of $\mathfrak{g} = \mathfrak{sl}(4, 1)$. In the notation above, the simple root system is given by
\begin{equation*}
    \{ \alpha_1 = e_1 - e_2, \hspace{0.3cm} \alpha_2 = e_2 - e_3, \hspace{0.3cm} \alpha_3 = e_3 - e_4, \hspace{0.3cm} \alpha_4 = e_4 - d_1\}.
\end{equation*}
Moreover, $\Delta_0^+ = \{\alpha_1, \alpha_2, \alpha_3, \alpha_1 + \alpha_2, \alpha_2 + \alpha_3, \alpha_1 + \alpha_2 + \alpha_3 \}$ and $\Delta_1^+ = \{e_i - d_1 \}, i = 1, 2, 3, 4$.
Note that $\rho_0 = \frac{1}{2}(3e_1 + e_2 - e_3 - 3 e_4)$ and $\rho_1 = \frac{1}{2}(e_1 + e_2 + e_3 + e_4 - 4 d_1)$. Thus, $c_1(\alpha) = \frac{(\rho_1, \alpha)}{\rho_0, \alpha} = 0, \forall \alpha \in \Delta_0^+$ and $(\rho_0, \alpha_i) = 1, i = 1, 2, 3$. Let $\Lambda = e_1 + e_2 + d_1$. Then $V(\Lambda)$ is a finite dimensional $\mathfrak{g}$-module with the highest weight $\Lambda$ where $\Lambda$ is an $\mathbb{N}$-typical weight and
\begin{equation*}
    (\Lambda, \alpha_1) = 0, (\Lambda, \alpha_2) = 1, (\Lambda, \alpha_3) = 0.
\end{equation*}
Therefore
\begin{align*}
    H_{\Lambda}(q) &= 2^4 \prod\limits_{\alpha \in \Delta_0^+ } \left( 1 + c_{\Lambda}(\alpha)q \frac{d}{dq} \right) \frac{1}{1 - q} \\
    &= 16 \left( 1 + q \frac{d}{dq} \right) \left( 1 + \frac{1}{2}q \frac{d}{dq} \right) \left( 1 + \frac{1}{2}q \frac{d}{dq} \right) \left(1 + \frac{1}{3}q \frac{d}{dq} \right) \frac{1}{1 - q} \\
    &= \frac{16(1 + q)}{(1 - q)^5}.
\end{align*}
\end{Example}

\section{A motivating geometric example}\label{section:grassmannian}

For a semisimple algebraic group $G$ over $\mathbb{C}$, the projective varieties with a transitive $G$ action correspond to quotients $G/P$ for a parabolic subgroup $P$. The equivariant projective embeddings of these varieties are in correspondence with dominant integral weights $\lambda$. The homogeneous coordinate ring of such a projective embedding has an associated Hilbert series $H_\lambda(q)$ recording the dimensions of its graded components, all but finitely many of whose coefficients are given by evaluation of the Hilbert polynomial $h_\lambda(t)$, from which the degree and dimension of the variety may also be read. This equivariant embedding arises as the unique closed orbit of $G$ on a projective space associated to the irreducible representation $V_\lambda$. This representation itself may be realized as the space of global sections on the variety $G/P$. For precise statements of these results see \cite{FH1991}.

Much of this story carries over to the case of Lie supergroups and algebraic supergeometry, whose theory has been greatly developed recently \cite{CCF2011, DeligneMorgan, Manin, Masuoka2009, Masuoka2012}. Indeed, every irreducible finite dimensional representation of a simple simply connected complex supergroup $G$ is realized analogously inside the space of global sections of certain line bundles \cite{Penkov} (super Borel-Weil-Bott theory). If the superalgebra $\mathcal{O}(G/P)$ of global sections of this line bundle is \textit{very ample}, i.e. it is generated in degree one, then $G/P$ admits a projective embedding and $\mathcal{O}(G/P)$ is called the coordinate superalgebra of this embedding. In this case, the dimension of $V(n \Lambda)$ equals the dimension of the $n$th graded component $\mathcal{O}(G/P)_n$. For precise statements see~\cite[Proposition~3.6]{CFV2018} and also \cite{Fioresi2015}. For an introduction to supergeometry see~\cite{Varadarajan}.

In this section we briefly consider atypical representations, motivated by the example of the Pl\"ucker embedding of the Grassmannian of $2|0$ subspaces in $\mathbb{C}^{4|1}$, denoted $Gr_{2|0}(4|1)$. This Grassmannian corresponds to $G/P$ with $G = SL(4,1)$ and $P$ the block upper triangular subgroup given by $\begin{bmatrix} A & B\\0 & D \end{bmatrix}$ where $A$ is $2 \times 2$, $B$ is $2 \times 3$, and $D$ is $3 \times 3$. For more details see \cite{CFL} where the big cell in this Grassmannian is called \textit{chiral super Minkowski space} and elements of its coordinate superalgebra are called \textit{chiral superfields}.

The Grassmannian supervariety does not admit a projective embedding in general \cite{Manin}, but in the case of $Gr_{2|0}(4|1)$ it does. In \cite{CFL}, the authors determine the coordinate superalgebra of $Gr_{2|0}(4|1)$ with respect to what they call its Pl\"ucker embedding in $\mathbb{P}\left( \bigwedge^2 \mathbb{C}^{4|1} \right)$, a construction they also give. We refer the reader to \cite{CFL} for all the required details and definitions regarding this example, but note that the relevant Lie superalgebra and highest weight are $\mathfrak{sl}(4,1)$ and $\Lambda = e_1 + e_2$. Our purpose here is to mention this example as motivation for future study.

The highest weight $\Lambda = e_1 + e_2$ is dominant integral, having numerical marks $(a_1,a_2,a_3,a_4) = (0,1,0,0)$. However, $\Lambda$ is not $\mathbb{N}$-typical since the condition $(\Lambda + \rho, \alpha) \neq 0$ is violated for the root $\beta_{41} := e_4 - d_1 \in \Bar{ \Delta}_1^+$. Since this condition is only violated for one root, $\Lambda$ is called \textit{singly atypical}. \color{black} In fact, we found that $k\Lambda$ is singly atypical for $k \in \mathbb{N}$. \color{black}

In \cite[Theorem 5]{Jeugt1995} a formula for the dimensions of representations corresponding to singly atypical dominant integral weights of $\mathfrak{sl}(m,n)$ is given. If $\Lambda = \sum \lambda_i e_i + \sum \mu_j d_j$ is a dominant integral weight where the root $\beta_{k\ell} = e_k - d_\ell$ is the only root in $\Bar{\Delta}_1^+$ with $(\Lambda + \rho, \beta_{k\ell}) = 0$ then $\Lambda$ is singly atypical and the formula giving its dimension is
\begin{multline}\label{eqn:dim-singly-atypical}
    \text{dim} V(\Lambda) = 2^{mn-1} \left( \prod_{\begin{array}{c}
        i<j \\
        i,j \neq k
    \end{array} } \frac{\lambda_i - \lambda_j + j-i}{j-i} \right) \left( \prod_{\begin{array}{c}
        i<j \\
        i,j \neq \ell
    \end{array} } \frac{\mu_i - \mu_j + j-i}{j-i} \right) \cdots \\
    \cdot \frac{(-1)^{n-k-\ell-1} }{(m-k)! (k-1)! (n-\ell)! (\ell-1)!} \cdot \left( \sum_{r=0}^{m+n-2} C_{m+n-2-r} \cdot e_r\left(x_1,\dots,x_{m-1},y_1,\dots,y_{n-1}\right) \right).
\end{multline}
In the formula (\ref{eqn:dim-singly-atypical}), $e_r$ again denotes the $r$th elementary symmetric polynomial in $m+n-2$ variables, and it is evaluated at $x_1,\dots,x_{m-1},y_1,\dots,y_{n-1}$ where $x_i = \lambda_k - \lambda_i + i - k$ and $y_j = \mu_j - \mu_\ell + \ell - j$. Finally, the coefficient $C_{m+n-2-r}$ of $e_r$ is given by the generating function $\frac{2 e^t}{1+e^t}$ where $C_i$ is the coefficient of $t^i$ in its power series expansion. These coefficients $C_i$ are related to the Bernoulli numbers $B_i$ (see OEIS A027641 for numerators and A027642 for denominators) by the formula
\begin{equation*}
    C_i = \frac{2(2^{i+1}-1)}{i+1}B_{i+1}.
\end{equation*}
We made these calculations for $\mathfrak{sl}(4,1)$ with highest weight $\Lambda = e_1 + e_2$ and found that
\begin{equation*}
    \begin{array}{rl}
        \text{dim}V(\Lambda) & =11 \\
        \text{dim}V(2\Lambda) & =46 \\
        \text{dim}V(3\Lambda) & =130 \\
        \text{dim}V(4\Lambda) & =295 \\
        \text{dim}V(5\Lambda) & =581 .
    \end{array}
\end{equation*}
We can compare these dimensions to the coefficients in the Hilbert series given by Theorem \ref{thm:main-theorem}. Since $\Lambda$ fails to be $\mathbb{N}$-typical, the coefficients from our formula provide upper bounds. Indeed, we obtain
\begin{align}
    \frac{16 + 16q}{(1-q)^5} & =16 + 96 q + 320 q^{2} + 800 q^{3} + 1680 q^{4} + 3136 q^{5} + \cdots \nonumber \\
    \sum_{k \in \mathbb{N}} \text{dim} V(k\Lambda)q^k & =1 + 11q + 46q^2 + 130q^3 + 295q^4 + 581q^5 + \cdots. \label{eqn:hilbert-series-grassmannian}
\end{align}
Thus, the actual dimensions are less than or equal to the dimensions predicted by our Hilbert series, which is expected for $\Lambda$ which fails to be $\mathbb{N}$-typical.
\color{black} One could attempt in future work to use formula (\ref{eqn:dim-singly-atypical}) in place of
\begin{equation*}
    2^{|\Delta_1^+|} \prod\limits_{\alpha \in \Delta_0^+}\frac{(k\Lambda + \rho, \alpha)}{(\rho_0, \alpha)}
\end{equation*}
and obtain a closed formula for the Hilbert series of singly atypical weights $\Lambda$ whose every nonnegative integer multiple is also singly atypical. As a consequence of the super Borel-Weil-Bott theorem, such a formula would also record the dimensions of the graded components of the coordinate superalgebra of the corresponding projective embedding, should it exist. Applied to the Pl\"ucker embedding of the super Grassmannian $Gr_{2|0}(4|1)$ such a formula would record the correct dimensions given by (\ref{eqn:hilbert-series-grassmannian}). Thus, there is geometric motivation to develop these ideas further.
\color{black}

We carry out a few calculations to verify that $\text{dim} \, V(n\Lambda) = \text{dim} \, \mathcal{O}(G/P)_n$ in this example, which holds as a consequence of the super Borel-Weil-Bott theorem.
Consider the free algebra
\begin{equation*}
    \mathcal{A} = \mathbb{C}\langle q_{12}, q_{13}, q_{14}, q_{23}, q_{24}, q_{34}, a_{55}, \lambda_1, \lambda_2, \lambda_3, \lambda_4 \rangle.
\end{equation*}
Adjoin relations making the even variables $q_{12}, q_{13}, q_{14}, q_{23}, q_{24}, q_{34}, a_{55}$ commute, the odd variables into Grassmann variables using $\langle \lambda_i \lambda_j + \lambda_j \lambda_i \mid i,j \in \{1,2,3,4\} \rangle$, and lastly making the even and odd variables commute. Next, adjoin the \textit{super Pl\"ucker relations} \cite{CFL, FLbook} generated~by
\begin{equation*}
    \begin{array}{rl}
        q_{12}q_{34} - q_{13}q_{24}+q_{14}q_{23} = 0, &  \\
        q_{ij}\lambda_k - q_{ik}\lambda_j + q_{jk}\lambda_i = 0, & 1\leq i < j < k \leq 4\\
        \lambda_i \lambda_j - a_{55} q_{ij} = 0, & 1\leq i < j \leq 4 \\
        \lambda_i a_{55} = 0, & 1 \leq i \leq 4.
    \end{array}
\end{equation*}
The $11$ degree one generators match the dimension $\text{dim} \, V(\Lambda) = 11$.
There is one additional relation, namely $a_{55} a_{55} = 0$. A careful reading of \cite[p. 213]{FLbook} shows that the Pl\"ucker relations are derived from the decomposibility of $Q=(r+\xi \epsilon_5) \wedge (s+\eta \epsilon_5)$, with $a_{55}$ defined as $\xi\eta$. But $\xi$ and $\eta$ are odd \cite[p. 212]{FLbook} hence $a_{55} a_{55} = \xi\eta\xi\eta=-\xi^2\eta^2=0$.
Using noncommutative Gr\"obner bases techniques \cite{letterplace} we calculated normal forms for all monomials of degree two in the generators.

\begin{equation*}
    \begin{tabular}{llllll}
$q_{12} q_{12}$ & $q_{13} q_{13}$ & $q_{14} q_{14}$ & $q_{23} q_{23}$ & $q_{24} q_{24}$ & $q_{34} q_{34}$ \\
$q_{13} q_{12}$ & $q_{14} q_{13}$ & $q_{24} q_{14}$ & $q_{24} q_{23}$ & $q_{34} q_{24}$ & $-\lambda_{4} \lambda_{3}$ \\
$q_{14} q_{12}$ & $q_{23} q_{13}$ & $q_{34} q_{14}$ & $q_{34} q_{23}$ & $-\lambda_{4} \lambda_{2}$ & $\lambda_{3} q_{34}$ \\
$q_{23} q_{12}$ & $q_{24} q_{13}$ & $-\lambda_{4} \lambda_{1}$ & $-\lambda_{3} \lambda_{2}$ & $\lambda_{2} q_{24}$ & $\lambda_{4} q_{34}$ \\
$q_{24} q_{12}$ & $q_{34} q_{13}$ & $\lambda_{1} q_{14}$ & $\lambda_{2} q_{23}$ & $\lambda_{3} q_{24}$ &  \\
$q_{34} q_{12}$ & $-\lambda_{3} \lambda_{1}$ & $\lambda_{2} q_{14}$ & $\lambda_{3} q_{23}$ & $\lambda_{4} q_{24}$ &  \\
$-\lambda_{2} \lambda_{1}$ & $\lambda_{1} q_{13}$ & $\lambda_{3} q_{14}$ & $\lambda_{4} q_{23}$ &  &  \\
$\lambda_{1} q_{12}$ & $\lambda_{2} q_{13}$ & $\lambda_{4} q_{14}$ &  &  &  \\
$\lambda_{2} q_{12}$ & $\lambda_{3} q_{13}$ &  &  &  &  \\
$\lambda_{3} q_{12}$ & $\lambda_{4} q_{13}$ &  &  &  &  \\
$\lambda_{4} q_{12}$ &  &  &  &  &  \\
\end{tabular}
\end{equation*}
Since $\text{dim} \, V(2\Lambda) = 46$, there should be $46$ linearly independent degree two elements, which matches what we see above. Thus we obtain $\text{dim} \, V(2\Lambda) = 46 = \text{dim} \, \mathcal{O}(G/P)_2$, confirming this consequence of the super Borel-Weil-Bott theorem in this example.
We also checked that there are $130$ linearly independent degree three elements when we include the relation $a_{55} a_{55} = 0$. This matches $\text{dim} \, V(3\Lambda) = 130$, as it should.

In this section, we simply reported the results of a calculation. In conclusion, this example gives geometric motivation to extend our formulas for the Hilbert series to the case of singly atypical representations.

\smallskip
\section*{Acknowledgements}
The second author would like to acknowledge the financial support provided by the Center of Excellence in Theoretical and Computational Science (TaCS-CoE), Faculty of Science, King Mongkut's University of Technology Thonburi (KMUTT). Both authors thank Rita Fioresi and Maria Lled\'o for very helpful discussions.
\smallskip


\end{document}